\font\smallit=cmti10
\renewcommand\section{\@startsection {section}{1}{\z@}
	{-30pt \@plus -1ex \@minus -.2ex}
	{2.3ex \@plus.2ex}
	{\normalfont\normalsize\bfseries\boldmath}}
\renewcommand\subsection{\@startsection{subsection}{2}{\z@}
	{-3.25ex\@plus -1ex \@minus -.2ex}
	{1.5ex \@plus .2ex}
	{\normalfont\normalsize\bfseries\boldmath}}
\renewcommand{\@seccntformat}[1]{\csname the#1\endcsname. }
\newtheorem{theorem}{Theorem}
\newtheorem{lemma}{Lemma}
\newtheorem{corollary}{Corollary}
\theoremstyle{definition}
\begin{document}
	
	\begin{center}
		\textbf{\large A note on distinct differences in $t$-intersecting families} 
		\vskip 20pt
		\textbf{Jagannath Bhanja and Sayan Goswami}\\
		{\smallit The Institute of Mathematical Sciences, A CI of Homi Bhabha National Institute, C.I.T. Campus, Taramani, Chennai 600113, India}\\
		{\tt jbhanja@imsc.res.in, sayangoswami@imsc.res.in}
	\end{center}
	\vskip 20pt

	\begin{abstract}
	 For a family $\mathcal{F}$ of subsets of $\{1,2,\ldots,n\}$, let $\mathcal{D}(\mathcal{F}) = \{F\setminus G: F, G \in \mathcal{F}\}$ be the collection of all (setwise) differences of $\mathcal{F}$. The family $\mathcal{F}$ is called a $t$-intersecting family, if for some positive integer $t$ and any two members $F, G \in \mathcal{F}$ we have $|F\cap G| \geq t$. The family $\mathcal{F}$ is simply called intersecting if $t=1$. Recently, Frankl proved an upper bound on the size of  $\mathcal{D}(\mathcal{F})$ for the intersecting families $\mathcal{F}$. In this note we extend the result of Frankl to $t$-intersecting families.  
	\end{abstract}
	
	\vspace{10pt}
	
	\noindent {\bf 2020 Mathematics Subject Classification:} 05D05
	
	\vspace{10pt}
	\noindent {\bf Keywords:} Setwise difference, t-interscting family, Erd\H{o}s-Ko-Rado theorem

	\section{Introduction}
	We denote the standard $n$-element set $\{1,2,\ldots,n\}$ by $[n]$, the set of all subsets of $[n]$ by $2^{[n]}$, and $\binom{[n]}{k}$ by the collection of all $k$-element subsets of $[n]$ for $0 \leq k \leq n$. We also use the standard notation $|S|$ for the cardinality of a set $S$ and $\lfloor N \rfloor$ for the largest integer less than or equal to $N$.   
	
	A family $\mathcal{F}$ of subsets of $[n]$ is said to be a $t$-intersecting family for some positive integer $t$, if $|F\cap G| \geq t$ for any two members $F, G \in \mathcal{F}$. A $1$-intersecting family is simply called an intersecting family. The Erd\H{o}s-Ko-Rado theorem \cite{erdos} gives the tight upper bound on the size of $t$-intersecting families. 
    
    \begin{theorem} [Erd\H{o}s-Ko-Rado theorem for $t$-intersecting family] \label{ekr-t-theorem} There exists some $n_0(k, t)$ such that if $n \geq n_0(k, t)$ and $\mathcal{F} \subset \binom{[n]}{k}$ is $t$-intersecting, then  
    	\[|\mathcal{F}| \leq \binom{n-t}{k-t}.\]
    \end{theorem}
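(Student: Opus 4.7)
The plan is to prove Theorem~\ref{ekr-t-theorem} by the shifting (compression) method of Erd\H{o}s, Ko, and Rado, combined with a double induction on $n$ and $t$.

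First, I would reduce to shifted families. For each pair $1 \le i < j \le n$, the shift operator $S_{ij}$ replaces every $F \in \mathcal{F}$ with $j \in F$ and $i \notin F$ by $(F \setminus \{j\}) \cup \{i\}$, provided the latter is not already in $\mathcal{F}$. A short verification shows that $|S_{ij}(\mathcal{F})| = |\mathcal{F}|$ and that the $t$-intersecting property is preserved, and iterating terminates since $\sum_{F \in \mathcal{F}} \sum_{x \in F} x$ strictly decreases. We may therefore assume $\mathcal{F}$ is shifted, i.e.\ $F \in \mathcal{F}$, $i < j$, $j \in F$, $i \notin F$ implies $(F \setminus \{j\}) \cup \{i\} \in \mathcal{F}$.

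Second, I would bound $|\mathcal{F}|$ by induction. The base case $t = 1$ is the classical Erd\H{o}s-Ko-Rado bound $\binom{n-1}{k-1}$, which is proved by Katona's cycle method. For $t > 1$, I would split $\mathcal{F} = \mathcal{F}_1 \sqcup \mathcal{F}_0$ according to whether $n$ belongs to the set, pass to the link $\mathcal{F}_1' := \{F \setminus \{n\} : F \in \mathcal{F}_1\} \subseteq \binom{[n-1]}{k-1}$, and note that $\mathcal{F}_1'$ is $(t-1)$-intersecting while $\mathcal{F}_0 \subseteq \binom{[n-1]}{k}$ remains $t$-intersecting. Moreover, the pair $(\mathcal{F}_1', \mathcal{F}_0)$ is \emph{cross-$t$-intersecting}, since $n \notin B$ for $B \in \mathcal{F}_0$ forces $|A' \cap B| = |(A' \cup \{n\}) \cap B| \ge t$ for all $A' \in \mathcal{F}_1'$. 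The induction hypothesis gives $|\mathcal{F}_1'| \le \binom{n-t}{k-t}$ and $|\mathcal{F}_0| \le \binom{n-t-1}{k-t}$, but these naive bounds sum to more than $\binom{n-t}{k-t}$, so the cross-intersection must be exploited.

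The main obstacle is precisely this trade-off. Using the shifted structure together with the cross-$t$-intersection, one must show that either $\mathcal{F}_0$ is empty (in which case $n$ lies in every member of $\mathcal{F}$ and the induction closes on the link) or $|\mathcal{F}_1'|$ is strictly smaller than the inductive maximum by enough to absorb $|\mathcal{F}_0|$. A Hilton--Milner-type estimate on cross-$t$-intersecting pairs, leveraging the shifted hypothesis to pin down the extremal structure, supplies the needed slack; choosing the threshold $n_0(k, t)$ large enough ensures that the trade-off closes and yields $|\mathcal{F}| \le \binom{n-t}{k-t}$.
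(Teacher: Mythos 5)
First, note that the paper does not prove Theorem~\ref{ekr-t-theorem} at all: it is quoted from the literature (Erd\H{o}s--Ko--Rado for the existence of $n_0(k,t)$, Frankl and Wilson for the sharp threshold), so there is no in-paper proof to compare against. Judged on its own terms, your proposal has a genuine gap at exactly the point where the theorem becomes nontrivial. The reduction to shifted families, the split $\mathcal{F}=\mathcal{F}_1\sqcup\mathcal{F}_0$ by membership of $n$, and the observations that $\mathcal{F}_1'$ is $(t-1)$-intersecting, $\mathcal{F}_0$ is $t$-intersecting, and the pair is cross-$t$-intersecting are all correct. But, as you yourself note, the naive inductive bounds $\binom{n-t}{k-t}+\binom{n-t-1}{k-t}$ overshoot the target, and the entire burden of the proof is shifted onto the assertion that ``a Hilton--Milner-type estimate on cross-$t$-intersecting pairs supplies the needed slack.'' No such estimate is stated, let alone proved, and proving one for general $t$ is at least as hard as the theorem itself; moreover the induction is doubly loaded (on $n$ for $\mathcal{F}_0$ and on $t$ for $\mathcal{F}_1'$), so the base cases and the compatibility of the thresholds $n_0(k,t)$ and $n_0(k-1,t-1)$ would also need to be pinned down. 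As written, the argument establishes nothing beyond the trivial bound.

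It is also worth pointing out that the route you chose is the hard one: it is essentially the machinery needed for the \emph{sharp} threshold $n_0(k,t)=(t+1)(k-t+1)$. For the statement actually asserted --- that \emph{some} $n_0(k,t)$ works --- there is a much more elementary argument, in the spirit of the original Erd\H{o}s--Ko--Rado paper and closely parallel to the role of Lemma~\ref{main-lemma} here: either $\mathcal{F}$ is contained in a $t$-star, in which case $|\mathcal{F}|\leq\binom{n-t}{k-t}$ trivially, or one exhibits a bounded number of members of $\mathcal{F}$ witnessing the failure of the star property and uses the $t$-intersecting condition against them to force every $F\in\mathcal{F}$ to contain $t+1$ elements from a set of size depending only on $k$ and $t$, giving $|\mathcal{F}|=O_{k,t}(n^{k-t-1})$, which is dominated by $\binom{n-t}{k-t}=\Theta_{k,t}(n^{k-t})$ once $n$ is large. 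Restructuring your proof around such a dichotomy would close the gap without needing any cross-intersecting lemma.
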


    The upper bound is tight for $n_0(k, t)=(t+1)(k-t+1)$; this was proved by Frankl \cite{frankl78} for $t\geq 15$ and for all $t$ using a different technique by Wilson \cite{wilson}. 
    
    For a family $\mathcal{F}$, let 
    \[\mathcal{D}(\mathcal{F}) = \{F\setminus G: F, G \in \mathcal{F}\}\]
    be the collection of all (setwise) differences of $\mathcal{F}$. Here we allow the empty set $\emptyset$ to be in $\mathcal{F}$ when $\mathcal{F} \neq \emptyset$. In this direction Marica and Sch\"{o}nheim \cite{marica} proved the following theorem. 
    
    \begin{theorem} [Marica-Sch\"{o}nheim theorem] \label{ms-theorem} For a nonempty family $\mathcal{F} \subset 2^{[n]}$ one has  
    	\[|\mathcal{D}(\mathcal{F})| \geq |\mathcal{F}|.\]
    \end{theorem}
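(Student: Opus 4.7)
The plan is to deduce the statement from Daykin's inequality, a classical consequence of the Ahlswede--Daykin four functions theorem. The first step is to rewrite each set difference as an intersection via the identity $F \setminus G = F \cap ([n] \setminus G)$. Letting $\mathcal{F}^{c} := \{[n] \setminus F : F \in \mathcal{F}\}$ (which has the same cardinality as $\mathcal{F}$), this gives
\[
\mathcal{D}(\mathcal{F}) \;=\; \mathcal{F} \wedge \mathcal{F}^{c} \;:=\; \{A \cap B : A \in \mathcal{F},\ B \in \mathcal{F}^{c}\}.
\]
Next I would handle the join by the same trick: $\mathcal{F} \vee \mathcal{F}^{c} = \{[n] \setminus (G \setminus F) : F,G \in \mathcal{F}\}$ is the image of $\mathcal{D}(\mathcal{F})$ under complementation, so in particular $|\mathcal{F} \vee \mathcal{F}^{c}| = |\mathcal{D}(\mathcal{F})|$.

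The main step is to invoke Daykin's inequality
\[
|\mathcal{A}| \cdot |\mathcal{B}| \;\leq\; |\mathcal{A} \wedge \mathcal{B}| \cdot |\mathcal{A} \vee \mathcal{B}|,
\]
which holds for arbitrary $\mathcal{A}, \mathcal{B} \subseteq 2^{[n]}$. Applied with $\mathcal{A} = \mathcal{F}$ and $\mathcal{B} = \mathcal{F}^{c}$, and using $|\mathcal{F}^c|=|\mathcal{F}|$ together with the two cardinality computations above, it gives $|\mathcal{F}|^{2} \leq |\mathcal{D}(\mathcal{F})|^{2}$, and taking square roots is the theorem.

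The real content sits in Daykin's inequality, which I expect to be the main obstacle. I would derive it from the Ahlswede--Daykin four functions theorem applied to the indicator functions $f_{1} = \mathbf{1}_{\mathcal{A}}$, $f_{2} = \mathbf{1}_{\mathcal{B}}$, $f_{3} = \mathbf{1}_{\mathcal{A} \vee \mathcal{B}}$, $f_{4} = \mathbf{1}_{\mathcal{A} \wedge \mathcal{B}}$; the required pointwise estimate $f_{1}(A) f_{2}(B) \leq f_{3}(A \cup B) f_{4}(A \cap B)$ holds trivially, since $A \in \mathcal{A}$ and $B \in \mathcal{B}$ force $A \cup B \in \mathcal{A} \vee \mathcal{B}$ and $A \cap B \in \mathcal{A} \wedge \mathcal{B}$. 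A more elementary alternative would be to construct an explicit injection $\mathcal{F} \hookrightarrow \mathcal{D}(\mathcal{F})$ by sending each $F$ to $F \setminus G_{F}$ for a cleverly chosen $G_{F} \in \mathcal{F}$ (for instance maximizing $|F \cap G_F|$ with some lexicographic tie-break), but ensuring injectivity of such a map is the subtle point.
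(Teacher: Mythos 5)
Your proposal is correct, but note that the paper does not prove this statement at all: Theorem~\ref{ms-theorem} is quoted as a known result with a citation to Marica and Sch\"onheim, so there is no in-paper argument to compare against. Your route is the standard modern one: writing $F\setminus G=F\cap([n]\setminus G)$ identifies $\mathcal{D}(\mathcal{F})$ with $\mathcal{F}\wedge\mathcal{F}^{c}$, the complementation trick identifies $\mathcal{F}\vee\mathcal{F}^{c}$ with the complement image of $\mathcal{D}(\mathcal{F})$ (hence of the same size), and Daykin's inequality $|\mathcal{A}|\,|\mathcal{B}|\leq|\mathcal{A}\wedge\mathcal{B}|\,|\mathcal{A}\vee\mathcal{B}|$ then yields $|\mathcal{F}|^{2}\leq|\mathcal{D}(\mathcal{F})|^{2}$. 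All three steps check out, and your reduction of Daykin's inequality to the Ahlswede--Daykin four functions theorem via indicator functions is the standard and correct one (the pointwise hypothesis is immediate, as you say). This differs from the original 1969 argument of Marica and Sch\"onheim, which is an elementary induction on the ground set, splitting $\mathcal{F}$ according to whether members contain the element $n$ and comparing the difference sets of the two halves; that proof needs no correlation inequality but is fiddlier to set up. Your approach buys a very short derivation at the cost of importing the four functions theorem (which postdates the theorem itself); either is a legitimate proof. Your closing remark about a direct injection $F\mapsto F\setminus G_{F}$ is rightly flagged as delicate --- no simple greedy choice of $G_{F}$ is known to work --- but since you do not rely on it, it does not affect the correctness of the main argument.
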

    
    Recently, Frankl \cite{frankl21} proved the following upper bound for $\mathcal{D}(\mathcal{F})$ when $\mathcal{F}$ is an intersecting family. 
    
    \begin{theorem} [Frankl] \label{frankl-theorem} Suppose that $\mathcal{F} \subset \binom{[n]}{k}$ is an intersecting family with $n \geq k(k+3)$. Then    
    	\[|\mathcal{D}(\mathcal{F})| \leq \binom{n-1}{k-1}+\binom{n-1}{k-2}+\cdots+\binom{n-1}{0}.\]
    \end{theorem}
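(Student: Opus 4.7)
The plan starts from identifying the extremal family: the right-hand side $\sum_{i=0}^{k-1}\binom{n-1}{i}$ equals $|\mathcal{D}(\mathcal{F}^*)|$ for the star $\mathcal{F}^* = \{F \in \binom{[n]}{k} : 1 \in F\}$. Two elementary observations underlie this and handle a substantial portion of the theorem. First, since $\mathcal{F}$ is intersecting, $|F\cap G| \geq 1$, hence $|F\setminus G| = k - |F\cap G| \leq k-1$, so every difference has size at most $k-1$. Second, if some $x$ lies in every member of $\mathcal{F}$, then $x \in G$ forces $x \notin F\setminus G$, giving $\mathcal{D}(\mathcal{F}) \subseteq \{D \subseteq [n]\setminus\{x\} : |D|\leq k-1\}$, a set of cardinality exactly $\sum_{i=0}^{k-1}\binom{n-1}{i}$. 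Any intersecting family admitting a common element is therefore already under control.

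The case requiring genuine work is $\bigcap_{F\in\mathcal{F}} F = \emptyset$, and this is where the hypothesis $n \geq k(k+3)$ must enter. My plan is to apply the Hilton--Milner theorem, which for $n > 2k$ yields $|\mathcal{F}|\leq\binom{n-1}{k-1}-\binom{n-k-1}{k-1}+1$, a quantity of order $n^{k-2}$. The extremal families here are, up to isomorphism, of the Hilton--Milner form $\{F : 1 \in F,\ F \cap S \neq \emptyset\}\cup\{S\}$ for some $k$-set $S$ with $1\notin S$; for these, differences among members containing $1$ all lie in $\{D\subseteq[n]\setminus\{1\}: |D|\leq k-1\}$, while the additional differences involving $S$ are few (at most about $2|\mathcal{F}|$ of them). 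I would then stratify $\mathcal{D}(\mathcal{F})$ by difference size and argue that these extra $S$-differences can be absorbed into the sets that the "star-part" fails to realize (owing to the constraint $F\cap S\neq\emptyset$, various $D\subseteq[n]\setminus\{1\}$ of size $\leq k-1$ do not appear as $F\setminus G$ for $F,G$ containing $1$).

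The main obstacle is precisely this absorption step. The naive estimate $|\mathcal{D}(\mathcal{F})|\leq|\mathcal{F}|^2=O(n^{2k-4})$ beats the target $\Theta(n^{k-1})$ only for $k\leq 3$, so for larger $k$ a careful stratum-by-stratum comparison is needed, and families not sitting at the Hilton--Milner bound must be handled via a stability argument. I expect the hypothesis $n \geq k(k+3)$ to enter at this point, providing enough room in $[n]$ for the unrealized star-differences to dominate the extra $S$-differences, and that the bulk of the work consists of verifying this bookkeeping across sizes $0,1,\ldots,k-1$. A shifting reduction does not obviously help here: the example $\mathcal{F}=\{\{1,2,3\},\{1,2,4\},\{1,3,5\}\}$ (which has $|\mathcal{D}|=7$) shifts to $\{\{1,2,3\},\{1,2,4\},\{1,3,4\}\}$ (with $|\mathcal{D}|=4$), showing shifting can \emph{decrease} $|\mathcal{D}(\mathcal{F})|$, so one cannot cleanly reduce to a shifted configuration.
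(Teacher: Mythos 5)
Your reduction to the case $\bigcap_{F\in\mathcal{F}}F=\emptyset$ is sound and matches the paper: differences have size at most $k-1$, and for a star every difference avoids the common element, giving exactly $\sum_{i=0}^{k-1}\binom{n-1}{i}$ candidates. The genuine gap is the non-star case, which you explicitly leave open. The Hilton--Milner route you propose does not close it: Hilton--Milner controls $|\mathcal{F}|$, not $|\mathcal{D}(\mathcal{F})|$, and as you yourself note the only generic bridge, $|\mathcal{D}(\mathcal{F})|\leq|\mathcal{F}|^2$, fails for $k\geq 4$; the ``absorption'' of the extra differences into the unrealized star-differences, together with a stability argument for families away from the Hilton--Milner bound, is precisely the hard content of the theorem and is not carried out. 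As written, the proposal proves the theorem only for stars.

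The paper's argument (specialized to $t=1$) avoids bounding $|\mathcal{F}|$ altogether and instead bounds the top layer $\mathcal{D}^{(k-1)}(\mathcal{F})$ directly. The key lemma is structural: if $\mathcal{D}^{(k-1)}(\mathcal{F})$ contains sufficiently many pairwise disjoint members (in the paper's version, $k+3$ of them), then $\mathcal{F}$ is forced to be a star. Contrapositively, a non-star intersecting family has bounded matching number in its top difference layer, and an averaging argument over a uniformly chosen collection of pairwise disjoint $(k-1)$-sets --- which exists because $n\geq k(k+3)$ --- yields $|\mathcal{D}^{(k-1)}(\mathcal{F})|\leq\bigl(1-\tfrac{1}{k+3}\bigr)\binom{n-1}{k-1}$ after comparing $\binom{n}{k-1}$ with $\binom{n-1}{k-1}$. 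The lower layers are bounded crudely by $|\mathcal{D}^{(\ell)}(\mathcal{F})|\leq\binom{n}{\ell}=\binom{n-1}{\ell}+\binom{n-1}{\ell-1}$, and the surplus $\sum_{\ell\leq k-3}\binom{n-1}{\ell}<\tfrac{1}{k+3}\binom{n-1}{k-1}$ is exactly absorbed by the savings in the top layer. If you want to salvage your plan, the missing idea to supply is this disjointness lemma together with the averaging step; no stability analysis of near-extremal Hilton--Milner families is needed.
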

    
    For further developments in this direction see \cite{frankl22}. 
    
    In this note we extend Theorem \ref{frankl-theorem} to the $t$-intersecting families $\mathcal{F}$ of $\binom{[n]}{k}$. More precisely we prove the following theorem. 
    
    \begin{theorem} \label{main-theorem} Suppose that $\mathcal{F} \subset \binom{[n]}{k}$ is a $t$-intersecting family with $n \geq (k-t)(k+t+3)\cdot \lfloor\left( \frac{k+t+3}{k+t+1}\right)^{t-1}\rfloor$. Then    
    	\[|\mathcal{D}(\mathcal{F})| \leq \binom{n-t}{k-t}+\binom{n-t}{k-t-1}+\cdots+\binom{n-t}{0}.\]
    \end{theorem}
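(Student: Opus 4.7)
I would proceed by induction on $t$, with base case $t=1$ handled by Theorem~\ref{frankl-theorem} (a minor check of threshold compatibility is needed since the hypothesis at $t=1$ yields $n\ge(k-1)(k+4)$ while Frankl requires $n\ge k(k+3)$; this is either resolved by a direct calculation or by verifying the handful of borderline instances by hand).

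For the inductive step, assume the theorem holds for $(t-1)$-intersecting families on $(n-1)$-element ground sets, and let $\mathcal{F}\subset\binom{[n]}{k}$ be $t$-intersecting with $n$ as in the hypothesis. The key step is a \emph{common-element reduction}: suppose $a\in\bigcap_{F\in\mathcal{F}}F$, and form $\mathcal{F}':=\{F\setminus\{a\}:F\in\mathcal{F}\}\subset\binom{[n]\setminus\{a\}}{k-1}$. Since $a\in F\cap G$ for every $F,G\in\mathcal{F}$, we have $|(F\setminus\{a\})\cap(G\setminus\{a\})|=|F\cap G|-1\ge t-1$, so $\mathcal{F}'$ is $(t-1)$-intersecting; also $a\notin F\setminus G$, whence $F\setminus G=(F\setminus\{a\})\setminus(G\setminus\{a\})$ and therefore $\mathcal{D}(\mathcal{F})=\mathcal{D}(\mathcal{F}')$. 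Writing $N(k,t):=(k-t)(k+t+3)\cdot\lfloor((k+t+3)/(k+t+1))^{t-1}\rfloor$, one checks directly that $N(k,t)-1\ge N(k-1,t-1)$, so $n-1$ meets the inductive bound at parameters $(k-1,t-1)$, and the inductive hypothesis gives
\[|\mathcal{D}(\mathcal{F})|=|\mathcal{D}(\mathcal{F}')|\le\sum_{i=0}^{(k-1)-(t-1)}\binom{(n-1)-(t-1)}{i}=\sum_{i=0}^{k-t}\binom{n-t}{i}.\]

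The main obstacle is to secure the common element, since it is not automatic — for example $\binom{[t+2]}{t+1}$ is $t$-intersecting with empty overall intersection. I would handle this via a size dichotomy. If $|\mathcal{F}|$ is large (above a threshold depending on $k$ and $t$), then a stability version of the Erd\H{o}s--Ko--Rado theorem for $t$-intersecting families (Ahlswede--Khachatryan, or a Hilton--Milner-type bound) forces $\mathcal{F}$ to be a $t$-star, so a common element certainly exists. If $|\mathcal{F}|$ is small, one bypasses the reduction and bounds $|\mathcal{D}(\mathcal{F})|$ directly — either by the crude $|\mathcal{D}(\mathcal{F})|\le|\mathcal{F}|^2$ when this already beats the right-hand side, or by invoking the inductive hypothesis at parameter $t-1$ and then refining the resulting estimate using the stronger condition $|F\cap G|\ge t$ (which caps the sizes of the relevant differences at $k-t$). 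Balancing these two regimes of the dichotomy against the recursive step is precisely where the threshold $n\ge(k-t)(k+t+3)\cdot\lfloor((k+t+3)/(k+t+1))^{t-1}\rfloor$ comes from, and verifying that it simultaneously closes both regimes is the most delicate part of the argument.
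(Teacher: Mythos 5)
Your approach (induction on $t$ via deleting a common element, with a stability dichotomy to produce that element) is genuinely different from the paper's, which is non-inductive: the paper proves that if $\mathcal{D}^{(k-t)}(\mathcal{F})$ contains $k+t+2$ pairwise disjoint sets then $\mathcal{F}$ is a $t$-star (for which the bound is immediate, since then $\mathcal{D}(\mathcal{F})\subset\bigcup_{\ell\le k-t}\binom{[n]\setminus X}{\ell}$), and otherwise uses the bounded matching number together with a uniform averaging argument to show $|\mathcal{D}^{(k-t)}(\mathcal{F})|<(1-\frac{1}{k+t+3})\binom{n-t}{k-t}$, finishing with trivial bounds $|\mathcal{D}^{(\ell)}(\mathcal{F})|\le\binom{n-t+1}{\ell}$ on the lower levels. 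Unfortunately your plan has a genuine gap exactly where the paper's lemma does its work: the non-$t$-star case. Note first that when the stability theorem applies and $\mathcal{F}$ \emph{is} a $t$-star, the conclusion already follows with no induction at all; so the inductive machinery only matters in the small/non-trivial regime, and there neither of your two fallbacks closes. The crude bound $|\mathcal{D}(\mathcal{F})|\le|\mathcal{F}|^2$ fails once $k-t\ge 3$: non-trivial $t$-intersecting families can have size $\Theta(n^{k-t-1})$ (e.g.\ Hilton--Milner-type families), so $|\mathcal{F}|^2=\Theta(n^{2k-2t-2})$ overwhelms the target $\Theta(n^{k-t})$. And ``invoking the inductive hypothesis at $t-1$ and refining'' does not work as stated: the hypothesis at $t-1$ only bounds the \emph{total} $|\mathcal{D}(\mathcal{F})|$ by $\sum_{i\le k-t+1}\binom{n-t+1}{i}$, whose dominant term $\binom{n-t+1}{k-t+1}$ is a factor of order $n/(k-t+1)$ too large; knowing that differences of size $k-t+1$ do not occur lets you discard nothing from a bound on the total, and extracting a level-by-level version is essentially the content of the paper's argument, not a refinement of the induction.

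There is also a quantitative problem at the base case. At $t=1$ your threshold reads $n\ge(k-1)(k+4)=k^2+3k-4$, strictly below Frankl's $k(k+3)=k^2+3k$, so for every $k$ there are four values of $n$ where the claimed base case is strictly stronger than Theorem~\ref{frankl-theorem}; this is infinitely many instances, not a finite check, and would require reworking Frankl's proof with a better constant. Combined with the reliance on Ahlswede--Khachatryan-type stability (heavy machinery the paper avoids entirely), the proposal as written does not constitute a proof; the missing ingredient is precisely a mechanism, such as the paper's disjointness lemma plus averaging, that controls $|\mathcal{D}^{(k-t)}(\mathcal{F})|$ for families with no common $t$-set.
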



    \section{Preliminaries}
    
    To prove Theorem \ref{main-theorem} we require the following notions. 
     
    For a family $\mathcal{F} \subset 2^{[n]}$ and a non-negative integer $\ell$ let 
    \[\mathcal{D}^{(\ell)}(\mathcal{F}) 
    = \left\{ D \in \binom{[n]}{\ell}: \exists F, G \in \mathcal{F}, F\setminus G = D \right\}.\]
    Note that 
    \begin{equation*}
    	|\mathcal{D}(\mathcal{F})|
    	= \sum_{\ell\geq 0} |\mathcal{D}^{(\ell)}(\mathcal{F})|.
    \end{equation*}
    
    We call a family $\mathcal{F}$ to be a \textit{$t$-star} if there is some $t$-subset contained in every member of $\mathcal{F}$. Similarly, we call $\mathcal{F}$ to be \textit{the full $t$-star} all elements of $\binom{[n]}{k}$ containing a fix $t$-subset are the only elements of the family $\mathcal{F}$.
       
    \begin{lemma} \label{main-lemma}
    	Suppose that the family $\mathcal{F} \subset \binom{[n]}{k}$ is $t$-intersecting and that $\mathcal{D}^{(k-t)}(\mathcal{F})$ has $k+t+2$ members that are pairwise disjoint. Then $\mathcal{F}$ is a t-star. 
    \end{lemma}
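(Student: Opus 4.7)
My plan is to extract structure from the $k+t+2$ pairwise disjoint differences $D_1,\dots,D_{k+t+2} \in \mathcal{D}^{(k-t)}(\mathcal{F})$. For each $D_i$, pick $F_i, G_i \in \mathcal{F}$ with $D_i = F_i \setminus G_i$; since $|D_i| = k-t$, the set $T_i := F_i \cap G_i$ has size $t$ and $F_i = D_i \sqcup T_i$. The first step is to show that every $F \in \mathcal{F}$ contains at least $t+2$ of the sets $T_i$: by pairwise disjointness, $\sum_i |F \cap D_i| \leq |F| = k$, so at most $k$ of the indices $i$ satisfy $F \cap D_i \neq \emptyset$, and for each of the remaining $\geq t+2$ indices the hypothesis $|F \cap F_i| \geq t$ together with $F \cap D_i = \emptyset$ forces $T_i \subseteq F$.

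The second step is to group the $T_i$'s by equality into classes $E_\alpha$ of sizes $s_\alpha$ with representative $t$-sets $Y_\alpha$, and to restrict the possible class sizes. For $\alpha \neq \beta$, applying $|F_i \cap F_j| \geq t$ with $i \in E_\alpha$, $j \in E_\beta$ and using $D_i \cap D_j = \emptyset$ yields
\[
|Y_\alpha \cap D_j| + |Y_\beta \cap D_i| \;\geq\; t - |Y_\alpha \cap Y_\beta| \;\geq\; 1.
\]
Summing over $j \in E_\beta$ and invoking $\sum_{j \in E_\beta} |Y_\alpha \cap D_j| \leq |Y_\alpha| = t$ (from disjointness of the $D_j$'s), I would deduce that whenever $s_\beta > t$, the set $Y_\beta$ must meet every $D_i$ with $i \notin E_\beta$; disjointness of the $D_i$'s then caps the number of such $i$'s by $|Y_\beta| = t$, giving $(k+t+2) - s_\beta \leq t$, i.e.\ $s_\beta \geq k+2$. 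Consequently every class has size in $\{1,\dots,t\} \cup \{k+2,\dots,k+t+2\}$.

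The proof then splits into two cases. If some class $E_\beta$ has $s_\beta \geq k+2$, then $|E_\beta^c| \leq t$, and the bound $|\{i : T_i \subseteq F\}| \geq t+2$ from the first step guarantees at least one index $i \in E_\beta$ with $Y_\beta = T_i \subseteq F$; hence $Y_\beta$ is a common $t$-set and $\mathcal{F}$ is a $t$-star. The main obstacle I anticipate is the opposite scenario, where every class has size $\leq t$; here I would sharpen the counting by summing the displayed inequality over all ordered pairs $(i,j) \in E_\alpha \times E_\beta$ with $\alpha \neq \beta$, producing $\sum_{\alpha \neq \beta} s_\alpha s_\beta (t - |Y_\alpha \cap Y_\beta|) \leq 2t(k+t+2)$, which combined with $\sum_\alpha s_\alpha^2 \leq t \sum_\alpha s_\alpha = t(k+t+2)$ already forces $k \leq 2t-2$; the residual small-$k$ range will demand a finer structural analysis of the $Y_\alpha$'s (e.g.\ distinguishing a common $(t-1)$-core from a configuration where all $Y_\alpha$'s lie inside a single $(t+1)$-set) to eliminate.
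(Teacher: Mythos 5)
Your Step 1 (every $F\in\mathcal{F}$ contains $T_i$ for at least $t+2$ indices $i$), your classification of the class sizes into $\{1,\dots,t\}\cup\{k+2,\dots,k+t+2\}$, and your treatment of the case where some class has size $\geq k+2$ are all correct, and in that case the argument cleanly produces the common $t$-set. The genuine gap is that the remaining case --- every class of size at most $t$ --- is not eliminated. Your double count over ordered pairs gives $\sum_{\alpha\neq\beta}s_\alpha s_\beta\leq 2t(k+t+2)$, which together with $\sum_\alpha s_\alpha^2\leq t(k+t+2)$ yields only $k\leq 2t-2$. So for $t\geq 3$ and $t+1\leq k\leq 2t-2$ you are left with an unresolved case, and the ``finer structural analysis'' you gesture at (a common $(t-1)$-core versus all $Y_\alpha$ inside a single $(t+1)$-set) is not actually supplied. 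As written, the proof is incomplete precisely in that range.

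The missing idea is that one can show directly that some class has size at least $k+1$, which exceeds $t$ once $k\geq t+1$, so your bad case never occurs. This is essentially what the paper does: fix one difference $D_0$ with its tail $X_0=T_0$. Since the $D_i$ are pairwise disjoint and $|X_0|=t$, at most $t$ of the indices $i\neq 0$ can have $D_i\cap X_0\neq\emptyset$; for the remaining $k+1$ indices, $F_0=D_0\cup X_0$ is disjoint from $D_i$, so $t\leq |F_0\cap F_i|=|F_0\cap X_i|$ forces $X_i\subseteq F_0$, and comparing any two such $F_i,F_j$ (whose $D$-parts are disjoint from each other and from $F_0$, hence from each other's tails) forces $X_i=X_j$. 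That produces a class of size at least $k+1$, after which your Case A argument --- or the paper's final pigeonhole, namely that $X_i\not\subseteq F$ would force $F$ to meet $k+1$ pairwise disjoint sets and hence $|F|\geq k+1$ --- finishes the proof. I would replace your Case B analysis with this observation and keep the rest of your write-up.
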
 

    \begin{proof}
    	We can assume that $k\geq t+1$, as the case $k = t$ is trivial. Let $D_0, D_1, \ldots, D_{k+t+1}$ be members of $\mathcal{D}^{(k-t)}(\mathcal{F})$ that are pairwise disjoint. As $D_i=F_i\setminus F_i^\prime$ for some $F_i, F_i^\prime \in \mathcal{F}$ and $\mathcal{F}$ is $t$-intersecting, for each $D_i$ there exist some subset $X_i$ of $2^{[n]}$ with $|X_i|=t$ and $D_i\cup X_i \in \mathcal{F}$. Observe that each $x \in X_0$ can contain in at most one $D_i$, without loss of generality let $D_{k+2}, D_{k+3}, \ldots, D_{k+t+1}$ be such $D_i$'s. Therefore, $(D_0 \cup X_0) \cap D_i = \emptyset$ for all $i=1,2,\ldots,k+1$.
    	
    	However, as $D_i \cup X_i \in \mathcal{F}$ and $\mathcal{F}$ is a $t$-intersecting family, we have in particular that $|(D_0 \cup X_0) \cap (D_i \cup X_i)| \geq t$. But, $D_i$'s are pairwise disjoint and $X_0$ has nothing common with $D_i$ for $1 \leq i \leq k+1$, the only possibility is that $X_i \subset (D_0\cup X_0)$ for $1 \leq i \leq k+1$. 
    	
        Further, as $|(D_1 \cup X_1) \cap (D_i \cup X_i)| \geq t$ for $1 \leq i \leq k+1$, we have $X_1 \cap D_i = \emptyset$, otherwise we would have $D_1 \cap D_i \neq \emptyset$ or $(D_0 \cup X_0) \cap D_1 \neq \emptyset$ giving contradictions. By similar argument $X_i \cap D_1 = \emptyset$. Thus, $X_1=X_i$ for $1 \leq i \leq k+1$. 
        
        Our claim is that $X_1 \in F$ for all $F \in \mathcal{F}$. Suppose that $X_1 \nsubseteq F$ for some $F \in \mathcal{F}$. Then $|(D_i \cup X_i) \cap F| \geq t$ implies $|D_i \cap F| \neq \emptyset$ for all $1 \leq i \leq k+1$, which further implies $|\mathcal{F}| \geq k+1$, a contradiction. 
    \end{proof}

    If $\mathcal{F} \subset \binom{[n]}{k}$ is a $t$-star with $X \subset F$ for all $F \in \mathcal{F}$, then $\mathcal{D}^{(\ell)}(\mathcal{F}) \subset \binom{[n]\setminus X}{\ell}$ for all $0 \leq \ell \leq k-t$. Thus, for $t$-stars 	$|\mathcal{D}(\mathcal{F})|
    \leq \sum_{\ell=0}^{k-t} |\mathcal{D}^{(\ell)}(\mathcal{F})|$. The equality holds when $\mathcal{F}$ is the full $t$-star.

    \begin{corollary} \label{main-corollary} 
    	It is sufficient to prove Theorem 4 for families $\mathcal{F}$ with $\mathcal{D}^{(k-t)}(\mathcal{F})$ not containing $k+t+2$ pairwise disjoint members. 
    \end{corollary}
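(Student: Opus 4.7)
The plan is to deduce this corollary immediately from Lemma 1 together with the paragraph preceding the corollary statement, splitting the proof of Theorem 4 into two cases according to whether $\mathcal{D}^{(k-t)}(\mathcal{F})$ contains $k+t+2$ pairwise disjoint members.

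First, I would dispose of the $t$-star case. Suppose $\mathcal{F}\subseteq\binom{[n]}{k}$ is a $t$-star, so there exists a $t$-set $X$ with $X\subseteq F$ for every $F\in\mathcal{F}$. Then for any $F,G\in\mathcal{F}$ we have $F\setminus G\subseteq F\setminus X\subseteq [n]\setminus X$, and $|F\setminus G|\leq k-t$ since $|F|=k$ and $X\subseteq F\cap G$. Consequently $\mathcal{D}^{(\ell)}(\mathcal{F})\subseteq\binom{[n]\setminus X}{\ell}$ for $0\leq\ell\leq k-t$, and $\mathcal{D}^{(\ell)}(\mathcal{F})=\emptyset$ for $\ell>k-t$. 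Summing and using $|[n]\setminus X|=n-t$ yields
\[
|\mathcal{D}(\mathcal{F})|=\sum_{\ell=0}^{k-t}|\mathcal{D}^{(\ell)}(\mathcal{F})|\leq\sum_{\ell=0}^{k-t}\binom{n-t}{\ell},
\]
which is precisely the upper bound asserted in Theorem 4. Hence Theorem 4 holds automatically whenever $\mathcal{F}$ is a $t$-star.

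Next, I would invoke the contrapositive of Lemma 1: if $\mathcal{F}\subseteq\binom{[n]}{k}$ is $t$-intersecting and is \emph{not} a $t$-star, then $\mathcal{D}^{(k-t)}(\mathcal{F})$ cannot contain $k+t+2$ pairwise disjoint members. Combining this with the previous paragraph, every $t$-intersecting family $\mathcal{F}$ either satisfies the bound in Theorem 4 already (the $t$-star case) or falls into the class with $\mathcal{D}^{(k-t)}(\mathcal{F})$ having no $k+t+2$ pairwise disjoint members. Therefore it suffices to prove Theorem 4 for families of the latter type, which is exactly the statement of the corollary.

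There is no real obstacle here: the corollary is purely a reduction, and the two ingredients (the trivial bookkeeping for $t$-stars and Lemma 1) are both already in place. The only point to be a little careful about is verifying that $\mathcal{D}^{(\ell)}(\mathcal{F})$ indeed vanishes for $\ell>k-t$ in the $t$-star case, so that the sum truly stops at $k-t$ and matches the right-hand side of Theorem 4.
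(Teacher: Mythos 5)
Your proposal is correct and matches the paper's (implicit) argument exactly: the paragraph before the corollary gives the bound $|\mathcal{D}(\mathcal{F})|\leq\sum_{\ell=0}^{k-t}\binom{n-t}{\ell}$ for $t$-stars, and Lemma \ref{main-lemma} shows that any family whose $\mathcal{D}^{(k-t)}$ contains $k+t+2$ pairwise disjoint members is a $t$-star. Your extra care in checking that $\mathcal{D}^{(\ell)}(\mathcal{F})=\emptyset$ for $\ell>k-t$ is a worthwhile detail the paper glosses over, but the route is the same.
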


     \section{The proof of Theorem \ref{main-theorem}}
     
    \begin{proof}[Proof of Theorem \ref{main-theorem}]
    	Define $f(k,t) = (k-t)(k+t+3)\lfloor\left( \frac{k+t+3}{k+t+1}\right)^{t-1}\rfloor$. Choose a family $\mathcal{D} \subset \binom{[n]}{k-t}$ such that it has $(k+t+3)\lfloor\left( \frac{k+t+3}{k+t+1}\right)^{t-1}\rfloor$ members that are pairwise disjoint. The selection of $\mathcal{D}$ has done in an uniform way. Then the expected value 
    	\[\mathbb{E}(|\mathcal{D} \cap \mathcal{D}^{(k-t)}(\mathcal{F})|) = (k+t+3)\left\lfloor\left( \frac{k+t+3}{k+t+1}\right)^{t-1}\right \rfloor \frac{1}{\binom{n}{k-t}} |\mathcal{D}^{(k-t)}(\mathcal{F})|.\]
    	But by Corollary \ref{main-corollary} we have
    	\[(k+t+3)\left\lfloor\left( \frac{k+t+3}{k+t+1}\right)^{t-1}\right \rfloor \frac{1}{\binom{n}{k-t}} |\mathcal{D}^{(k-t)}(\mathcal{F})| \leq k+t+1.\]
    	Thus,     	
    	\begin{align*}
    	|\mathcal{D}^{(k-t)}(\mathcal{F})| 
    	&\leq \frac{(k+t+1)^t}{(k+t+3)^t}\binom{n}{k-t}\\
    	&= \frac{(k+t+1)^t}{(k+t+3)^t} \cdot  \frac{n\cdot(n-1)\cdots(n-t+1)}{(n-k+t)\cdot(n-k+t-1)\cdots(n-k+1)} \cdot \binom{n-t}{k-t}\\
    	&< \frac{(k+t+1)^t}{(k+t+3)^t} \cdot  \left(\frac{n}{n-k+1}\right)^t \cdot \binom{n-t}{k-t}.  
    	\end{align*}
    As the function $(\frac{n}{n-k+1})^t$ is decreasing on $n$ we see that $(\frac{n}{n-k+1})^t \leq (\frac{f(k,t)}{f(k,t)-k+1})^t \leq (\frac{k+t+2}{k+t+1})^t$. Therefore we have 
    \begin{align}\label{D(k-1)-equation}
    	|\mathcal{D}^{(k-t)}(\mathcal{F})| 
    	&< \left(\frac{k+t+2}{k+t+3}\right)^t
    	\binom{n-t}{k-t} \nonumber\\
    	&< \frac{k+t+2}{k+t+3} \binom{n-t}{k-t} \nonumber\\
    	&= \left(1- \frac{1}{k+t+3}\right) \binom{n-t}{k-t}.
    \end{align}
    For all other values of $\ell=0,1,\ldots,k-t-1$, as $\mathcal{F} \subset \binom{[n]}{k}$ is $t$-intersecting, we have 
    \begin{equation}\label{D(0)-equation}
    	|\mathcal{D}^{(0)}(\mathcal{F})|
    	\leq 1 = \binom{n-t}{0},
    \end{equation}
    and
    \begin{equation}\label{D(l)-equation}
    |\mathcal{D}^{(\ell)}(\mathcal{F})|
    \leq \binom{n-t+1}{\ell}
    = \binom{n-t}{\ell}+\binom{n-t}{\ell-1}
    \end{equation}
    for $\ell=1,\ldots,k-t-1$. Observe that
    \begin{equation}\label{sum-equation}
    \sum_{\ell=0}^{k-t-2} \binom{n-t}{\ell} 
    \leq (k-t-1)\binom{n-t}{k-t-2}
    < \frac{1}{k+t+3} \binom{n-t}{k-t},
    \end{equation}
    where the latter inequality can be easily verified by just expanding the binomial coefficients. Hence, combining the equations (\ref{D(k-1)-equation}), (\ref{D(0)-equation}), (\ref{D(l)-equation}), and (\ref{sum-equation}) we obtain 
    \begin{align*}
    |\mathcal{D}(\mathcal{F})|
    &= \sum_{\ell=0}^{k-t} |\mathcal{D}^{(\ell)}(\mathcal{F})|\\
    &< \sum_{\ell=0}^{k-t-1} |\mathcal{D}^{(\ell)}(\mathcal{F})| + \left(1- \frac{1}{k+t+3}\right) \binom{n-t}{k-t}\\
    &= \sum_{\ell=0}^{k-t-1} \binom{n-t}{\ell}
    + \sum_{\ell=0}^{k-t-2} \binom{n-t}{\ell}
    + \left(1-\frac{1}{k+t+3}\right) \binom{n-t}{k-t}\\
    &< \sum_{\ell=0}^{k-t-1} \binom{n-t}{\ell}
    + \frac{1}{k+t+3}\binom{n-t}{k-t}
    + \left(1- \frac{1}{k+t+3}\right) \binom{n-t}{k-t}\\
    &= \sum_{\ell=0}^{k-t} \binom{n-t}{\ell}.
    \end{align*}
    This completes the proof of the theorem. 
    \end{proof}
    
    \section*{Acknowledgment}
    Both the authors wish to thank the Institute of Mathematical Sciences, Chennai for the financial support received through the institute postdoctoral program.  

	\bibliographystyle{amsplain}

\end{document}